\documentclass[12pt]{article}
\textwidth= 13.5cm
\textheight= 22.0cm
\topmargin = -10pt
\evensidemargin=20pt
\oddsidemargin=20pt
\headsep=25pt
\parskip=10pt

\usepackage{amsmath,amsfonts,amssymb}
\usepackage{amsthm}
\usepackage{hyperref}

\makeatletter
 
  \@addtoreset{equation}{section}
 \makeatother

\pagestyle{headings}
\newtheorem{thm}{Theorem}[section]
\newtheorem{lem}[thm]{Lemma}

\newtheorem{cor}[thm]{Corollary}

\theoremstyle{definition}

\let\abs=\envert
\newcommand{\floor}[1]{\left\lfloor#1\right\rfloor}

\theoremstyle{remark}

\begin{document}
\title{On equations $\sigma(n)=\sigma(n+k)$ and $\varphi(n)=\varphi(n+k)$
\footnote{MSC subject classification: 11A05, 11A25.}
\footnote{Key words: Arithmetic functions.}}
\author{Tomohiro Yamada}
\date{}
\maketitle

\begin{abstract}
We study the distribution of solutions of equations $\sigma(n)=\sigma(n+k)$ and $\varphi(n)=\varphi(n+k)$.
We give new upper bounds for these solutions.
\end{abstract}

\section{Introduction}
In this paper, we study equations $\sigma(n)=\sigma(n+k)$ and $\varphi(n)=\varphi(n+k)$.
As far as the author knows, an equation of these types was first referred by Ratat \cite{Rat},
who asked for which values of $n$ the equation $\varphi(n)=\varphi(n+1)$ holds and
gave $n=1, 3, 15, 104$ for examples.  In 1918, answering to Ratat's question, Goormaghtigh \cite{Goo}
gave $n=164, 194, 255, 495$.

After then, several authors such as Klee \cite{Kle}, Moser \cite{Mos}, Lal and Gillard \cite{LG}, Ballew, Case and Higgins \cite{BCH},
Baillie \cite{Bai1} \cite{Bai2} and Graham, Holt and Pomerance \cite{GHP} searched for solutions
to $\varphi(n)=\varphi(n+k)$.

Klee \cite{Kle} and Moser \cite{Mos} noted that if $p, 2p-1$ are both odd primes and $n=2(2p-1)$, then
$\varphi(n)=2p-2=\varphi(4p)=\varphi(n+2)$.  Under the quantitative prime $k$-tuplet conjecture,
the number of such primes $\leq x$ is $\gg x/(\log x)^2$.  Similarly to their result, we can see that
$\sigma(n)=\sigma(n+22)$ if $3l-1, 14l-1$ are both primes and $n=28(3l-1)$.

On the other hand, Erd\H{o}s, Pomerance and S\'{a}rk\"{o}zy \cite{EPS} showed that the number of solutions
$n\leq x$ to $\varphi(n)=\varphi(n+1)$ is at most $x\exp(-(\log x)^{1/3})$ for sufficiently large $x$
and a similar result holds for $\sigma(n)=\sigma(n+1)$.  They also conjectured that the number
of such solutions below sufficiently large $x$ is at least $x^{1-\epsilon}$ for every $\epsilon>0$.

Graham, Holt and Pomerance \cite{GHP} generalized these results.
They showed that, if $j$ and $j+k$ have the same prime factors with $g=\gcd (j, j+k)$, both of $jr/g+1, (j+k)r/g+1$
are primes which do not divide $j$ and
\begin{equation}\label{eq11}
n=j\left(\frac{j+k}{g}r+1\right),
\end{equation}
then $\varphi(n)=\varphi(n+k)$.
Moreover, they gave the corresponding result of \cite{EPS}.
According to them, we denote by $N(k, x)$ the number of integers $n\leq x$ with $\varphi(n)=\varphi(n+k)$
and $N_1(k, x)$ the number of integers $n\leq x$ with $\varphi(n)=\varphi(n+k)$ which are in the form
(\ref{eq11}).
Then they showed that $N_1(k, x)\leq x\exp(-(\log x)^{1/3})$ for sufficiently large $x$ and
$N(k, x)=N_1(k, x)$ if $k$ is odd, so that their result implies the result of \cite{EPS}.

Our purpose is to prove corresponding results on the equation $\sigma(a_1n+b_1)=\sigma(a_2n+b_2)$
and $\varphi(a_1n+b_1)=\varphi(a_2n+b_2)$ and improve the upper bound $x\exp(-(\log x)^{1/3})$
of \cite{GHP}.  The main results are the following two theorems.

\begin{thm}\label{th11}
Let $a_1, b_1, a_2, b_2$ be integers such that $a_1a_2(a_1b_2-a_2b_1)\neq 0$.

Assume that $m_1, m_2, k_1, k_2$ satisfy the relations
\begin{equation}\label{eq12}
m_1=\frac{k_2(a_1b_2-a_2b_1)}{a_2(k_2-k_1)}, m_2=\frac{k_1(a_1b_2-a_2b_1)}{a_1(k_2-k_1)},\\
k_1\sigma(m_1)=k_2\sigma(m_2).
\end{equation}
and $q_i=k_il-1 (i=1, 2)$ are both prime.
If
\begin{equation}\label{eq122}
n=\frac{m_1q_1-b_1}{a_1},
\end{equation}
then we have $a_1n+b_1=m_1q_1, a_2n+b_2=m_2q_2$ and $\sigma(a_1n+b_1)=\sigma(a_2n+b_2)$.

Similarly, assume that $m_1, m_2, k_1, k_2$ satisfy the relations
\begin{equation}\label{eq13}
m_1=\frac{k_2(a_1b_2-a_2b_1)}{a_2(k_1-k_2)}, m_2=\frac{k_1(a_1b_2-a_2b_1)}{a_1(k_1-k_2)},\\
k_1\varphi(m_1)=k_2\varphi(m_2).
\end{equation}
and $q_i=k_il+1 (i=1, 2)$ are both prime.
If
\begin{equation}\label{eq132}
n=\frac{m_1q_1-b_1}{a_1},
\end{equation}
then we have $a_1n+b_1=m_1q_1, a_2n+b_2=m_2q_2$ and $\varphi(a_1n+b_1)=\varphi(a_2n+b_2)$.

Furthermore, if $a_1=a_2$ and the condition (\ref{eq13}) holds, then $m_1, m_2=m_1+b_2-b_1$
must have the same prime factors.
\end{thm}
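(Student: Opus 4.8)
The plan is to handle the theorem in three stages: first the two algebraic identities $a_1n+b_1=m_1q_1$ and $a_2n+b_2=m_2q_2$; then the equalities of $\sigma$- and $\varphi$-values, which will drop out of multiplicativity once the identities are established; and finally the ``Furthermore'' clause, which is the only part needing a real idea. Throughout I write $D=a_1b_2-a_2b_1$, which is nonzero by hypothesis.

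For the $\sigma$-case, the first identity is immediate from (\ref{eq122}): $a_1n+b_1=(m_1q_1-b_1)+b_1=m_1q_1$. For the second I would compute
$$a_2n+b_2=\frac{a_2(m_1q_1-b_1)}{a_1}+b_2=\frac{a_2m_1q_1+D}{a_1}.$$
Substituting $a_2m_1=k_2D/(k_2-k_1)$ from (\ref{eq12}) and $q_1=k_1l-1$, the numerator telescopes to $Dk_1(k_2l-1)/(k_2-k_1)=Dk_1q_2/(k_2-k_1)$, which equals $a_1m_2q_2$ by the formula for $m_2$; hence $a_2n+b_2=m_2q_2$. For the function values, since $q_1,q_2$ are primes not dividing $m_1,m_2$ (the only point to flag: this needs $q_i\nmid m_i$, automatic once $q_i>m_i$, i.e.\ for all large $l$, as $m_1,m_2$ are fixed by $k_1,k_2$), multiplicativity gives $\sigma(m_iq_i)=\sigma(m_i)(q_i+1)=\sigma(m_i)k_il$, and the two sides coincide precisely because $k_1\sigma(m_1)=k_2\sigma(m_2)$.

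The $\varphi$-case is identical after a single sign change: the telescoping now uses $q_i-1=k_il$ instead of $q_i+1=k_il$, which is exactly why the denominators in (\ref{eq13}) carry $k_1-k_2$ rather than $k_2-k_1$. The same computation yields $a_2n+b_2=m_2q_2$, and $\varphi(m_iq_i)=\varphi(m_i)(q_i-1)=\varphi(m_i)k_il$ together with $k_1\varphi(m_1)=k_2\varphi(m_2)$ closes the argument.

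For the last clause, put $a_1=a_2$. The formulas in (\ref{eq13}) then give $m_2-m_1=(k_1-k_2)(b_2-b_1)/(k_1-k_2)=b_2-b_1$ and, crucially, $k_1m_1=k_2m_2$. Combining this with $k_1\varphi(m_1)=k_2\varphi(m_2)$ and eliminating the ratio $k_1/k_2$ produces
$$\frac{m_1}{\varphi(m_1)}=\frac{m_2}{\varphi(m_2)}.$$
Since $m/\varphi(m)=\prod_{p\mid m}p/(p-1)$ depends only on the set of prime divisors of $m$, it remains to show that $S\mapsto\prod_{p\in S}p/(p-1)$ is injective on finite sets of primes. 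This is the heart of the matter and the main obstacle; everything before it is bookkeeping. I would argue by contradiction: if the prime supports differ, cancel the common primes to obtain disjoint sets $S',T'$, not both empty, with the same value; clearing denominators gives
$$\prod_{p\in S'}p\cdot\prod_{q\in T'}(q-1)=\prod_{q\in T'}q\cdot\prod_{p\in S'}(p-1).$$
Let $P=\max(S'\cup T')$, say $P\in S'$. Then $P$ divides the left side, but on the right $P\nmid q$ for every $q\in T'$ (as $q<P$) and $P\nmid(p-1)$ for every $p\in S'$ (as $0<p-1<P$), so $P$ divides neither factor, a contradiction. Hence $S=T$, that is, $m_1$ and $m_2$ have the same prime factors.
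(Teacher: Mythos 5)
Your proof is correct and follows essentially the same route as the paper: the same direct algebraic verification that $a_2n+b_2=m_2q_2$ (the paper organizes it as $a_1m_2q_2-a_2m_1q_1=a_1b_2-a_2b_1$, using $a_1m_2k_2=a_2m_1k_1$ and $a_1m_2-a_2m_1=-(a_1b_2-a_2b_1)$, while you substitute and telescope, which is the same computation), then multiplicativity of $\sigma$ and $\varphi$, then reduction of the final clause to $\varphi(m_1)/m_1=\varphi(m_2)/m_2$. If anything you are more careful than the paper: you flag the tacit hypothesis $q_i\nmid m_i$ needed for $\sigma(m_iq_i)=\sigma(m_i)k_il$ (resp. $\varphi(m_iq_i)=\varphi(m_i)k_il$), which the paper's statement and proof silently assume, and you supply a complete largest-prime argument for the fact that $\varphi(m_1)/m_1=\varphi(m_2)/m_2$ forces $m_1,m_2$ to have the same prime factors, which the paper merely asserts.
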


\begin{thm}\label{th12}
Let $a_1, b_1, a_2, b_2$ be integers with $a_1>0, a_2>0, a_1b_2-a_2b_1\neq 0$.
Let $N(a_1, b_1, a_2, b_2; x)$ denote the number of integers $n\leq x$ with $\varphi(a_1n+b_1)=\varphi(a_2n+b_2)$
that are not in the form (\ref{eq122}) given in Theorem \ref{th11}.
Similarly, let $M(a_1, b_1, a_2, b_2; x)$ denote the number of integers $n\leq x$ with $\sigma(a_1n+b_1)=\sigma(a_2n+b_2)$
that are not in the form (\ref{eq132}) given in Theorem \ref{th11}.
Then $N(a_1, b_1, a_2, b_2; x)$ and $M(a_1, b_1, a_2, b_2; x)$ are both
$\ll x\exp(-(2^{-1/2}+o(1))(\log x \log\log\log x)^{1/2})$.
\end{thm}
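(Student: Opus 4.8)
The plan is to follow the large--prime--factor strategy of Erd\H{o}s--Pomerance--S\'ark\"ozy and Graham--Holt--Pomerance, sharpening the final optimisation; the two functions $\varphi$ and $\sigma$ are treated in parallel, using $\varphi(pC)=(p-1)\varphi(C)$ and $\sigma(pC)=(p+1)\sigma(C)$ for a prime $p\nmid C$. Write $A=a_1n+b_1$, $B=a_2n+b_2$ and $D=a_1b_2-a_2b_1\ne 0$. The basic structural fact is that $\gcd(A,B)$ divides $a_1B-a_2A=D$, so $A$ and $B$ are coprime apart from a factor bounded in terms of the coefficients; in particular no prime exceeding $\abs{D}$ divides both.

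First I would discard the atypical $n\le x$. Fixing a parameter $y$, the number of $n\le x$ for which one of the linear forms $a_in+b_i$ is $y$-smooth is $\ll x\rho(u)$ with $u=\log x/\log y$ (de Bruijn's estimate for linear forms), and the number divisible by a square $p^2$ with $p>y$ is $\ll x/y$; removing these, every surviving $n$ has $p:=P(A)>y$ and $q:=P(B)>y$ occurring to the first power, where $P(\cdot)$ denotes the largest prime factor. By the coprimality remark $p\ne q$, and on writing $A=pA'$, $B=qB'$ the equation $\varphi(A)=\varphi(B)$ becomes $(p-1)\varphi(A')=(q-1)\varphi(B')$, and likewise $(p+1)\sigma(A')=(q+1)\sigma(B')$ for $\sigma$.

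Next I would extract the divisibility constraint and peel off the structured solutions. Put $g=\gcd(p-1,q-1)$ and $p-1=gs$, $q-1=gt$ with $\gcd(s,t)=1$, so that $s\varphi(A')=t\varphi(B')$; this is exactly the relation of (\ref{eq13}) with $(k_1,k_2)=(s,t)$ and shift $l=g$, and the $n$ for which it is realised with $A'=m_1$, $B'=m_2$ sharing the same prime factors are precisely those of the family excluded from $N$ and $M$. For every remaining $n$ one can choose a prime divisor $r>z$ of $(p-1)/g$ (or of $(q-1)/g$), where $z$ is a second parameter; then $r\mid\varphi(B')$ forces $B$ to possess a prime factor $\ell\equiv 1\pmod r$, pinning $n$ to few residue classes modulo a large modulus. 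The failure case, in which no such $r>z$ exists because $p-1$ (respectively $p+1$) is $z$-smooth, is controlled by a Dickman-type bound for smooth shifted primes.

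The remaining and hardest step is the counting. One bounds the number of surviving non-structured $n\le x$ by summing, over $p$, over prime divisors $r>z$ of $p-1$, and over primes $\ell\equiv 1\pmod r$, the number of $n\le x$ with $p\mid A$ and $\ell\mid B$; by coprimality such $n$ lie in a single class modulo $p\ell$, giving a main term $\ll x\sum_{r>z}\bigl(\sum_{p\equiv 1\,(r)}1/p\bigr)\bigl(\sum_{\ell\equiv 1\,(r)}1/\ell\bigr)\ll x(\log\log x)^2/(z\log z)$ by Mertens' theorem in progressions. The delicate points are (i) the range $p\ell>x$, where the divisor bound $x/(p\ell)$ is useless and must be replaced by a direct Brun--Titchmarsh sieve on $B$, and (ii) making the smooth-shifted-prime estimate uniform in the coefficients. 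Balancing this divisor loss against the smoothness loss and optimising the thresholds $y$ and $z$ produces the square-root exponent, with the constant $2^{-1/2}$ and the factor $\log\log\log x$ emerging from the Dickman density in the optimisation; carrying out this optimisation uniformly in $a_1,b_1,a_2,b_2$ is the crux of the proof.
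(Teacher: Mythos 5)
Your plan is the Erd\H{o}s--Pomerance--S\'ark\"ozy / Graham--Holt--Pomerance cascade: (i) a Dickman loss for $y$-smoothness of the linear forms themselves, (ii) a second Dickman-type loss for primes $p>y$ with $p-1$ (or $(p-1)/g$) $z$-smooth, and (iii) a main count $\ll x(\log\log x)^2/(z\log z)$. The genuine gap is your final claim that balancing $y$ and $z$ ``produces the square-root exponent'': it cannot. To reach a bound $x e^{-E}$, step (iii) forces $\log z\gtrsim E$; the smooth-shifted-prime loss $\exp(-(1+o(1))v\log v)$ with $v=\log y/\log z$ forces $v\log v\gtrsim E$, hence $\log y\gtrsim E^2/\log\log x$; and the Dickman loss $\exp(-(1+o(1))u\log u)$ with $u=\log x/\log y$ forces $u\log u\gtrsim E$, which combined with the previous bound gives $E^3\lesssim \log x\,(\log\log x)^2$, i.e. $E\ll(\log x)^{1/3}(\log\log x)^{2/3}$. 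That is exactly why GHP stop at $x\exp(-(\log x)^{1/3})$; the theorem's $E=2^{-1/2}(\log x\log\log\log x)^{1/2}$ is structurally out of reach for any choice of $y,z$ in this scheme (with that $E$, your loss (ii) is only about $x\exp(-c\log\log x)$). A secondary gap: when $g=\gcd(p-1,q-1)$ is large, $(p-1)/g$ can be $z$-smooth even though $p-1$ is not, so your ``failure case'' is not controlled by smooth shifted primes alone.

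The idea your proposal is missing is the paper's Lemma \ref{lm23} (Theorem 3.1 of Banks--Friedlander--Pomerance--Shparlinski): the number of $n\le x$ with $P(\varphi(n))\le y$, respectively $P(\sigma(n))\le y$ --- smoothness of the \emph{value}, not of the argument --- is at most $x\exp(-u(1+o(1))\log\log u)$. Granting this, if $P(\sigma(a_1n+b_1))\ge y$, then the equation puts the \emph{same} prime $p\ge y$ into both values $\sigma(a_1n+b_1)=\sigma(a_2n+b_2)$, so after Lemma \ref{lm22} removes high prime powers, both arguments contain prime factors of the form $k_ip-1$: one writes $a_in+b_i=m_i(k_ip-1)$ with $k_1\sigma(m_1)=k_2\sigma(m_2)$ as in (\ref{eq41})--(\ref{eq42}). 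A single large prime links the two sides, so the intermediate parameter $z$ and the auxiliary primes $r\mid p-1$, $\ell\equiv 1\pmod r$ disappear, and only a two-way balance remains. The count then splits on whether $m_1m_2\le x/z$ (with $z=y^{1/2}$): if so, the pair $(m_1,m_2)$ has $\ll (x/z)\log x$ choices and $p$ divides $a_2(m_1+b_1)-a_1(m_2+b_2)$, which is nonzero except in the degenerate case $a_2m_1k_1=a_1m_2k_2$ --- and that degenerate case is precisely the excluded family of Theorem \ref{th11}, which is how the structured solutions get peeled off; if instead $m_1m_2>x/z$, then $q_1q_2=(k_1p-1)(k_2p-1)$ is forced to be small, both $q_i\equiv-1\pmod p$, and summing $x/(q_1q_2)+1$ over $p\ge y$ gives $\ll x/z$ again. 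The exponent $1/2$, the factor $\log\log\log x$, and the constant $2^{-1/2}$ all come from balancing the BFPS bound $\exp(-u(1+o(1))\log\log u)$ against $y^{-1/2}$ at $y=\exp(2^{1/2}(\log x\log\log\log x)^{1/2})$, where $\log\log u\sim\log\log\log x$; they do not emerge from Dickman's $\rho$. To repair your write-up you would have to replace steps (i)--(ii) by an appeal to a result of this BFPS type.
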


Applied in some particular cases, these theorems give the following corollaries.

\begin{cor}\label{th13}
If $k$ is odd, then the number of solutions $n\leq x$ to $\varphi(n)=\varphi(n+k)$ is
$\ll x\exp(-(2^{-1/2}+o(1))(\log x \log\log\log x)^{1/2})$.
\end{cor}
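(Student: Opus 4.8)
The plan is to derive the corollary from Theorem~\ref{th12} by specializing $a_1=a_2=1$, $b_1=0$ and $b_2=k$. With this choice one has $\varphi(a_1n+b_1)=\varphi(n)$ and $\varphi(a_2n+b_2)=\varphi(n+k)$, so the solutions of $\varphi(n)=\varphi(n+k)$ are precisely the solutions of $\varphi(a_1n+b_1)=\varphi(a_2n+b_2)$. The hypotheses of Theorem~\ref{th12} hold, since $a_1=a_2=1>0$ and $a_1b_2-a_2b_1=k\neq 0$ for odd $k$. Hence Theorem~\ref{th12} already bounds $N(1,0,1,k;x)$, the number of such $n\leq x$ that are not of the exceptional form~(\ref{eq132}), by $\ll x\exp(-2^{-1/2}(\log x\log\log\log x)^{1/2})$. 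It remains only to show that for odd $k$ there are no solutions of the form~(\ref{eq132}), so that $N(1,0,1,k;x)$ counts every solution.

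To eliminate the exceptional family I would appeal to the final clause of Theorem~\ref{th11}. Because here $a_1=a_2$, any tuple $(m_1,m_2,k_1,k_2)$ producing a solution of the form~(\ref{eq132}) must satisfy~(\ref{eq13}), and that clause then forces $m_1$ and $m_2=m_1+(b_2-b_1)=m_1+k$ to have the same set of prime divisors. The crucial point is that this is impossible when $k$ is odd: $m_1$ and $m_1+k$ have opposite parities, so exactly one of them is divisible by $2$ and the other is not, whence they cannot share all their prime factors. Thus the necessary relation~(\ref{eq13}) can never be satisfied, and no admissible tuple $(m_1,m_2,k_1,k_2,l)$ exists; the family~(\ref{eq132}) is empty for odd $k$.

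Putting the two steps together, for odd $k$ the quantity $N(1,0,1,k;x)$ equals the total number of $n\leq x$ with $\varphi(n)=\varphi(n+k)$, and the estimate of Theorem~\ref{th12} gives the claimed bound. The only substantive step is the parity obstruction of the preceding paragraph; the rest is simply checking that the chosen parameters lie within the scope of Theorem~\ref{th12}. I anticipate no real difficulty, the whole argument being a clean specialization once the final clause of Theorem~\ref{th11} is available---this mirrors the identity $N(k,x)=N_1(k,x)$ for odd $k$ of Graham, Holt and Pomerance recalled in the introduction.
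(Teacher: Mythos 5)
Your proposal is correct and follows essentially the same route as the paper: invoke Theorem~\ref{th12} with $a_1=a_2=1$, $b_1=0$, $b_2=k$ to bound the non-exceptional solutions, and use the final clause of Theorem~\ref{th11} (for $a_1=a_2$, the tuple must have $m_1$ and $m_2=m_1+k$ with identical prime factors) together with the parity obstruction to show the exceptional family is empty when $k$ is odd. The paper compresses this into the remark that same prime factors force $k$ to be even, which is exactly your argument in contrapositive form.
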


\begin{cor}\label{th14}
If there exists no integer $m$ such that $\sigma(m)/m=\sigma(m+1)/(m+1)=k$ for some integer $k$,
then the number of solutions $n\leq x$ to $\sigma(n)=\sigma(n+1)$ is
$\ll x\exp(-(2^{-1/2}+o(1))(\log x \log\log\log x)^{1/2})$.
\end{cor}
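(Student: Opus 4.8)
The plan is to specialize Theorem~\ref{th12} to the quadruple $a_1=a_2=1$, $b_1=0$, $b_2=1$ and then to show that the hypothesis of the corollary forces the exceptional family of Theorem~\ref{th11} to be empty, so that \emph{every} solution of $\sigma(n)=\sigma(n+1)$ is counted by $M(1,0,1,1;x)$. First I would verify the hypotheses of Theorem~\ref{th12}: here $a_1,a_2>0$ and $a_1b_2-a_2b_1=1\neq0$, so the theorem applies and yields
\[
M(1,0,1,1;x)\ll x\exp\!\left(-2^{-1/2}(\log x\,\log\log\log x)^{1/2}\right).
\]
By definition $M(1,0,1,1;x)$ counts precisely those $n\le x$ with $\sigma(n)=\sigma(n+1)$ that do \emph{not} arise from the construction of Theorem~\ref{th11}; hence it suffices to prove that, under the stated hypothesis, no solution of that special form exists, and the corollary is immediate from the displayed bound.

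Next I would unwind the relations (\ref{eq12}) in this case. Substituting $a_1=a_2=1$, $b_1=0$, $b_2=1$ gives $m_1=k_2/(k_2-k_1)$ and $m_2=k_1/(k_2-k_1)$, so that $m_1-m_2=1$. Writing $m_2=m$ and $m_1=m+1$, the same formulas force $k_1=md$ and $k_2=(m+1)d$ with $d=k_2-k_1$. The remaining constraint $k_1\sigma(m_1)=k_2\sigma(m_2)$ then simplifies, after cancelling $d$, to
\[
m\,\sigma(m+1)=(m+1)\,\sigma(m).
\]
Since $\gcd(m,m+1)=1$, this identity forces both $m\mid\sigma(m)$ and $(m+1)\mid\sigma(m+1)$. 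Thus any choice of positive integers $m_1,m_2,k_1,k_2$ satisfying (\ref{eq12}) produces an integer $m$ with $m\mid\sigma(m)$ and $(m+1)\mid\sigma(m+1)$.

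Finally I would invoke the hypothesis: if no such $m$ exists, then (\ref{eq12}) has no solution in positive integers, the family (\ref{eq122}) is empty, and therefore $\sigma(n)=\sigma(n+1)$ has no solution of the special form. Consequently the number of $n\le x$ with $\sigma(n)=\sigma(n+1)$ equals $M(1,0,1,1;x)$, and the bound above finishes the proof. The entire argument rests on Theorem~\ref{th12}; the only genuine step here is the algebraic reduction of (\ref{eq12}) to the pair of divisibility conditions, and I expect the mild subtlety to be merely noticing that the formulas for $m_1,m_2$ automatically force the consecutive pair $m,m+1$, after which the conclusion is a direct translation of definitions.
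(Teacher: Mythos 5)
Your proposal is correct and follows essentially the same route as the paper: specialize Theorem~\ref{th12} to $(a_1,b_1,a_2,b_2)=(1,0,1,1)$, then reduce the relations (\ref{eq12}) to $m\,\sigma(m+1)=(m+1)\,\sigma(m)$ and use $\gcd(m,m+1)=1$ to extract the divisibilities $m\mid\sigma(m)$ and $(m+1)\mid\sigma(m+1)$, which the hypothesis forbids. The only cosmetic difference is that the paper first normalizes $(k_1,k_2)=1$ to deduce $k_2-k_1=1$, whereas you keep $d=k_2-k_1$ general and cancel it; the content is identical.
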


The proof of Theorem \ref{th11} is straightforward.  The proof of Theorem \ref{th12}
depends on one of many results of Banks, Friedlander, Pomerance, Shparlinski \cite{BFPS}
concerning to multiplicative structures of values of Euler's totient function.

It is unlikely that there exists an integer $m$ such that $\sigma(m)/m=\sigma(m+1)/(m+1)=k$ for some integer $k$.
However, the proof of the nonexistence of such an integer will be difficult.
Luca \cite{Luc} shows that in the case $k=2$, such an integer never exists.
We note that the nonexistence of such an integer would follow from the conjecture that there
exists no odd integer $m>1$ for which $m$ divides $\sigma(m)$.

\section{Preliminary Lemmas}

In this section, we shall introduce some basic lemmas on distributions of integers
with special multiplicative structures.

We denote by $P(n), p(n)$ the largest and smallest prime factor of $n$ respectively.
We denote by $x, y, z$ real numbers and we put $u=\log x/\log y$
and $v=\log y/\log z$.  These notations are used in later sections.

\begin{lem}\label{lm21}
Denote by $\Psi(x, y)$ the number of integers $n\leq x$ divisible by no prime $>y$.
If $y>\log^2 x$, then we have $\Psi(x, y)<x\exp(-u\log u+o(u))$ as $x, u$ tend to infinity.
\end{lem}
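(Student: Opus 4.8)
The plan is to use Rankin's method (the Dirichlet-series upper bound); the stated estimate is classical and could alternatively be quoted from de Bruijn's work on smooth numbers, but Rankin's trick gives it cleanly. For any real $\sigma$ with $0<\sigma<1$, each integer $n$ counted by $\Psi(x,y)$ satisfies $(x/n)^\sigma\ge 1$, so that
\begin{equation*}
\Psi(x,y)\le\sum_{P(n)\le y}\left(\frac{x}{n}\right)^\sigma=x^\sigma\prod_{p\le y}\left(1-p^{-\sigma}\right)^{-1},
\end{equation*}
the Euler product converging because $\sigma>0$. Taking logarithms and using $-\log(1-t)=t+O(t^2)$ for $0\le t\le\tfrac12$, this gives
\begin{equation*}
\log\frac{\Psi(x,y)}{x}\le -(1-\sigma)\log x+\sum_{p\le y}p^{-\sigma}+O\!\left(\sum_{p\le y}p^{-2\sigma}\right).
\end{equation*}

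First I would fix the free parameter by writing $\sigma=1-\beta/\log y$, so that $(1-\sigma)\log x=\beta u$ and the leading term becomes $-\beta u$. The hypothesis $y>x_1^2$ forces $\log u<\tfrac12\log y$, and since I intend to take $\beta=\log u$, this keeps $\sigma$ bounded away from $0$ (indeed $\sigma>\tfrac12$); in particular $2\sigma>1$, whence $\sum_{p\le y}p^{-2\sigma}\le\sum_p p^{-2\sigma}=O(1)$ and the final error term is harmless.

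The heart of the argument is the evaluation of $S:=\sum_{p\le y}p^{-\sigma}=\sum_{p\le y}p^{-1}e^{\beta\log p/\log y}$. By partial summation against the prime number theorem $\pi(t)\sim t/\log t$, followed by the substitution $t=e^s$ and $w=\beta s/\log y$, one is led to
\begin{equation*}
S=(1+o(1))\int_0^\beta\frac{e^w}{w}\,dw=(1+o(1))\,\mathrm{Ei}(\beta)=(1+o(1))\frac{e^\beta}{\beta}
\end{equation*}
as $\beta\to\infty$, using $\mathrm{Ei}(\beta)\sim e^\beta/\beta$. Choosing $\beta=\log u$, which tends to infinity with $u$, then yields $\beta u=u\log u$ and $e^\beta/\beta=u/\log u$, so that
\begin{equation*}
\log\frac{\Psi(x,y)}{x}\le -u\log u+(1+o(1))\frac{u}{\log u}+O(1)=-u\log u+o(u),
\end{equation*}
which is exactly the claimed bound, since $u/\log u=o(u)$.

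The main obstacle is making the estimate for $S$ uniform over the admissible range of $y$ and $\beta$: the asymptotic $S\sim e^\beta/\beta$ must persist as $\beta=\log u\to\infty$ while simultaneously $\sigma\to 1$, and this is precisely where the hypothesis $y>x_1^2$ is used, both to force $u\to\infty$ and to keep $1-\sigma=\beta/\log y$ small enough that the partial-summation error (and the crude replacement of $\pi(t)$ by $t/\log t$) does not overwhelm the main term. Once this uniformity is established the optimization is routine, and the choice $\beta=\log u$ delivers the stated form with the clean $o(u)$ error.
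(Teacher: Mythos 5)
Your strategy differs from the paper's, which does not prove the lemma at all but cites de Bruijn's theorem (with Pomerance's simpler proof); your Rankin argument, however, has a genuine gap, located in the evaluation of $S=\sum_{p\le y}p^{-\sigma}$. Since $0<\sigma<1$, every term satisfies $p^{-\sigma}>p^{-1}$, so Mertens' theorem gives $S\ge\sum_{p\le y}p^{-1}=\log\log y+O(1)$; hence the claimed asymptotic $S=(1+o(1))e^{\beta}/\beta=(1+o(1))u/\log u$ is false whenever $\log\log y$ is of larger order than $u/\log u$. The slip is concealed by your substitution: the $w$-integral runs from $\beta\log 2/\log y$ to $\beta$, not from $0$, and the portion near the lower endpoint contributes about $\log(\log y/\beta)\asymp\log\log y$; note also that $\int_0^\beta e^w w^{-1}\,dw$ diverges, so identifying it with $\mathrm{Ei}(\beta)$ (a principal value) is not legitimate. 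Carrying the lost term along, Rankin's method yields $\log(\Psi(x,y)/x)\le-\beta u+\log\log y+(1+o(1))e^{\beta}/\beta+O(1)$, which with $\beta=\log u$ gives the desired $-u\log u+o(u)$ only under the extra hypothesis $\log\log y=o(u)$. That extra hypothesis holds in the range where this paper needs smooth-number estimates (there $u\asymp(\log x/\log\log\log x)^{1/2}$, far larger than $\log\log y\asymp\log\log x$), but it does not follow from the lemma's hypotheses $y>x_1^2$, $u\to\infty$: for example $y=x^{1/\log\log\log x}$ is admissible, with $u=\log\log\log x\to\infty$, yet $\log\log y\sim\log\log x$ dwarfs $u\log u$, so your final bound reads $-u\log u+(1+o(1))\log\log x>0$ and is worse than trivial.

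Moreover, this cannot be repaired by a smarter choice of $\sigma$. For every $\sigma\le 1$ one has $\log\prod_{p\le y}(1-p^{-\sigma})^{-1}\ge S\ge\max\{\log\log y+O(1),\,c\,e^{\beta}/\beta\}$, the second bound coming from the primes in $(y^{1-1/\beta},y]$ alone; writing $\beta=\log u+t$, the Rankin bound is at most $-u\log u+o(u)$ only if both $tu\gtrsim\log\log y$ and $tu\ge c\,e^{t}u/(\log u+t)$, and the latter forces $t=O(\log\log u)$, which is incompatible with the former once $\log\log y\gg u\log\log u$. So the Rankin infimum $\inf_{0<\sigma<1}x^{\sigma}\prod_{p\le y}(1-p^{-\sigma})^{-1}$ genuinely exceeds $x\exp(-u\log u+o(u))$ in part of the lemma's stated range, and pure Rankin cannot prove the lemma there; de Bruijn's theorem proper (proved by iteration on the functional equation for $\Psi$, or by the saddle-point method) is needed, which is exactly what the paper invokes. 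Your argument becomes correct, after fixing the integral, if you add the restriction $\log\log y=o(u)$ (which would still suffice for the application in Section 4); to get the lemma as stated you must fall back on the citation, as the paper does.
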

\begin{proof}
This follows from a well-known theorem of de Bruijn \cite{Bru}.  A simpler proof
is given by Pomerance \cite{Pom}.
\end{proof}

\begin{lem}\label{lm22}
Let
\begin{equation}
S=\{n\mid p^2\mid n\mbox{ for some }p, a\mbox{ with }\sigma(p^a)>y, a\geq 2\}.
\end{equation}
Then we have the number of elements in $S$ below $x$ is $\ll xy^{-1/2}$.
\end{lem}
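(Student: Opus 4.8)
The plan is to dominate $S$ by a union over its large prime-power divisors and then estimate the resulting reciprocal sum. First I would note that $n$ lies in $S$ precisely when $n$ is divisible by some prime power $d=p^{a}$ with $a\ge 2$ and $d>y$. Since an element of $S$ may be counted several times in this way, a union bound suffices for an upper estimate and gives
\[
\#\{n\le x:n\in S\}\le\sum_{\substack{p\ \mathrm{prime},\ a\ge 2\\ p^{a}>y}}\floor{\frac{x}{p^{a}}}\le x\sum_{\substack{p\ \mathrm{prime},\ a\ge 2\\ p^{a}>y}}\frac{1}{p^{a}}.
\]
It then remains to show that the inner double sum is $\ll y^{-1/2}$.

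Next I would split the summation according to the size of the prime. For primes $p>\sqrt{y}$ the condition $p^{a}>y$ already holds at $a=2$, so these primes contribute at most $\sum_{p>\sqrt{y}}\sum_{a\ge 2}p^{-a}=\sum_{p>\sqrt{y}}p^{-2}/(1-1/p)\ll\sum_{p>\sqrt{y}}p^{-2}\ll y^{-1/2}$, the final estimate following by comparison with $\int_{\sqrt{y}}^{\infty}t^{-2}\,dt$. This is the range that produces the main term $y^{-1/2}$.

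For the remaining primes $p\le\sqrt{y}$ the relevant exponents are $a\ge 3$, and for each such $p$ I let $a_{0}=a_{0}(p)$ be the least exponent with $p^{a_{0}}>y$. The tail $\sum_{a\ge a_{0}}p^{-a}$ is a geometric series bounded by $2p^{-a_{0}}<2/y$, since $p^{a_{0}}>y$. Summing over the at most $\pi(\sqrt{y})\ll\sqrt{y}/\log y$ such primes contributes $\ll\sqrt{y}/(y\log y)\ll y^{-1/2}$. Adding the two ranges yields an inner sum that is $\ll y^{-1/2}$, whence $\#\{n\le x:n\in S\}\ll xy^{-1/2}$. The only point needing care is the bookkeeping in this split: one must check that the contributions from small primes with large exponents, where a single prime occurs with many admissible $a$, stay summable, and this is guaranteed by $p^{-a_{0}}<1/y$ together with the sparsity of primes up to $\sqrt{y}$.
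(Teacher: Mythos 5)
Your proof is correct, and it reaches the bound by a somewhat more elementary route than the paper. Both arguments begin with the same reduction: a union bound over the prime powers $p^{a}>y$, $a\ge 2$, giving $\#\{n\le x : n\in S\}\le x\sum p^{-a}$ (the paper streamlines this by keeping, for each prime $p$, only the minimal admissible exponent $\gamma_p$, so its sum has one term per prime). The difference is in how the reciprocal sum is estimated. The paper observes that the numbers $p^{\gamma_p}$ are distinct perfect powers exceeding $y$, and bounds $\sum_{p}p^{-\gamma_p}$ by partial summation against the counting function $\Pi(t)$ of perfect powers, using $\Pi(t)\ll t^{1/2}$; this packages all primes and exponents into a single counting function and one integral $\int_y^x \Pi(t)t^{-2}\,dt\ll y^{-1/2}$. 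You instead split at $p=\sqrt{y}$: for $p>\sqrt{y}$ the full geometric series $\sum_{a\ge2}p^{-a}\ll p^{-2}$ sums to $\ll y^{-1/2}$, and for $p\le\sqrt{y}$ each prime contributes a tail $<2/y$, so even the trivial bound of $\sqrt{y}$ such primes suffices (your appeal to $\pi(\sqrt{y})\ll\sqrt{y}/\log y$ is more than is needed). Your version is self-contained and avoids introducing $\Pi(t)$ and partial summation; the paper's version is slightly slicker and makes visible where the exponent $1/2$ comes from (the density of squares). Both correctly yield $\ll xy^{-1/2}$.
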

\begin{proof}
Let $\Pi(t)$ be the number of perfect powers below $t$.
It is clear that $\Pi(t)<t^{1/2}+t^{1/3}+\ldots+t^{1/k}<t^{1/2}+kt^{1/3}=t^{1/2}(1+o(1))$,
where $k=\floor{(\log t)/(\log 2)}$.

Let us denote by $\gamma_p$ the smallest integer $\gamma$ for which $\sigma(p^{\gamma})>y$ and $\gamma>1$.
Clearly we have $\#S(x)\leq x\sum_{p\leq x}p^{-\gamma_p}$.  Since $p^{\gamma_p}>\sigma(p^{\gamma_p})/2>y/2$,
we have by partial summation
\begin{equation}
\sum_{p\leq x}\frac{1}{p^{\gamma_p}}<\frac{\Pi(x)}{x}-\frac{\Pi(y/2)}{y}+\int_{y/2}^x\frac{\Pi(t)}{t^2}dt\ll y^{-1/2}.
\end{equation}
This proves the lemma.
\end{proof}

We use an upper bound for $\Phi(x, y)$ the number of integers
$n\leq x$ such that $P(\sigma(n))\leq y$ or $P(\varphi(n))\leq y$.

\begin{lem}\label{lm23}
Let $\Phi(x, y)$ denote the number of integers $n\leq x$ such that $P(\varphi(n))\leq y$.
and $\Sigma(x, y)$ denote the number of integers $n\leq x$ such that $P(\sigma(n))\leq y$.
For any fixed $\epsilon>0$ and $(\log\log x)^{1+\epsilon}< y\leq x$, we have
\begin{equation}
\Phi(x, y)\ll x\left(\exp(-u(1+o(1))\log\log u) \right),
\end{equation}
and
\begin{equation}
\Sigma(x, y)\ll x\max\{y^{-1/2}, \left(\exp(-u(1+o(1))\log\log u) \right)\},
\end{equation}
when $u=(\log x)/(\log y)\rightarrow\infty$.
\end{lem}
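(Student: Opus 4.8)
The plan is to exploit that $P(\varphi(n))\le y$ and $P(\sigma(n))\le y$ are \emph{multiplicative} conditions, so that $\Phi(x,y)$ and $\Sigma(x,y)$ can be attacked by Rankin's method; I carry out the case of $\Phi$, the case of $\Sigma$ being parallel. Since $\varphi(n)=\prod_{p^a\|n}\varphi(p^a)$ and a product is $y$-smooth exactly when each factor is, $\varphi(n)$ is $y$-smooth if and only if $\varphi(p^a)=p^{a-1}(p-1)$ is $y$-smooth for every $p^a\|n$. For $p\le y$ this is automatic (indeed $p-1<y$), while for $p>y$ it forces $a=1$ and $p-1$ to be $y$-smooth, since a higher power would contribute the factor $p>y$. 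The Dirichlet series restricted to the admissible $n$ therefore factors as an Euler product, and Rankin's inequality yields, for every $s\in(1/2,1)$,
\begin{equation}
\Phi(x,y)\le x^{s}\prod_{p\le y}\left(1-p^{-s}\right)^{-1}\prod_{\substack{p>y\\ p-1\ y\text{-smooth}}}\left(1+p^{-s}\right).
\end{equation}
Writing $s=1-\beta/(\log y)$, so that $x^{s}=x\exp(-u\beta)$, standard Mertens-type estimates bound the logarithm of the first product by $O(e^{\beta}/\beta)$.

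The crux is the second product $G(s)$. I would bound $\log G(s)\le\sum_{p>y,\,p-1\ y\text{-smooth}}p^{-s}$, majorize the number of primes $p\le t$ with $p-1$ being $y$-smooth by the trivial bound $\Psi(t,y)$, insert the de Bruijn estimate of Lemma \ref{lm21}, and evaluate by partial summation. The resulting integral is governed by its saddle point and yields a bound of the shape $\log G(s)\ll(\log y)\exp(e^{\beta-1})$, which grows \emph{doubly exponentially} in $\beta$. This is the whole phenomenon: the smooth part alone would permit $\beta\asymp\log u$ and hence the stronger saving $\exp(-u\log u)$, but $G(s)$ stays below the main saving $u\beta$ only when $e^{\beta-1}\ll\log u$, that is, when $\beta\le(1+o(1))\log\log u$.

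Optimizing accordingly, I would take $\beta=(1+o(1))\log\log u$; then $x^{s}$ contributes $\exp(-u\beta)=\exp(-(1+o(1))u\log\log u)$, while $e^{\beta}/\beta$ and the optimized $\log G(s)$ are both $o(u\log\log u)$ and vanish into the error, giving $\Phi(x,y)<x\exp(-u(1+o(1))\log\log u)$. The hypothesis $y>(\log\log x)^{1+\epsilon}$ is exactly what forces $1-s=\beta/(\log y)$ to be small, keeping $s$ away from $1/2$ so that the Euler-product and Mertens estimates remain valid; in the thin range where Lemma \ref{lm21} does not apply across all of $[y,x]$ the large-$t$ tail of the integral is negligible and a cruder uniform smooth-number bound suffices. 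For $\Sigma$ one repeats the argument with $\sigma(p)=p+1$ in place of $p-1$, the prime powers $p^{a}\|n$ with $p>y$ and $a\ge2$ contributing at most $\sum_{p>y}\sum_{a\ge2}p^{-as}$ to the Euler product, which is negligible. The main obstacle is precisely this control of $G(s)$: the only unconditional input for shifted smooth primes is the lossy majorant $T(t)\le\Psi(t,y)$, and its doubly-exponential cost in the Rankin parameter is what degrades the exponent from $\log u$ to $\log\log u$.
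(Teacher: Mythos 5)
There is a genuine gap. Your structural reduction is correct ($P(\varphi(n))\le y$ forces $n$ to be a $y$-smooth number times distinct primes $q>y$ with $q-1$ being $y$-smooth, and your removal of squarefull contributions in the $\sigma$-case matches the paper's use of Lemma \ref{lm22}), but the Rankin optimization fails in a large part of the stated range of $y$ --- including exactly the range the paper needs. By your own estimate $\log G(s)\ll(\log y)\exp(e^{\beta-1})$, so in particular the upper bound you can prove for $\log G(s)$ is never smaller than a constant times $\log y$; moreover the first product is $\ge\prod_{p\le y}(1-p^{-1})^{-1}\gg\log y$, so its logarithm is $\log\log y+O(e^{\beta}/\beta)$, not $O(e^{\beta}/\beta)$. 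For the final bound you need $u\beta-\log G(s)\ge(1+o(1))u\log\log u$ for some admissible $\beta$, and this is impossible once $\log y\gtrsim u\log\log u$: for $\beta\le 1$ the saving $u\beta$ is already $o(u\log\log u)$, while for $\beta\ge 1$ the derivative $u-(\log y)e^{\beta-1}\exp(e^{\beta-1})\le u-e\log y$ is negative, so the maximum of $u\beta-(\log y)\exp(e^{\beta-1})$ is below $u$. Concretely, take $y=\exp((\log x)^{2/3})$, well inside the hypothesis $(\log\log x)^{1+\epsilon}<y\le x$ with $u=(\log x)^{1/3}\to\infty$: then $\log y=(\log x)^{2/3}$ dwarfs $u\log\log u=(1+o(1))(\log x)^{1/3}\log\log\log x$, and no choice of $\beta$ works. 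Worse, the paper's application in the proof of Theorem \ref{th12} takes $y=\exp(2^{1/2}(\log x\log\log\log x)^{1/2})$, for which $\log y=(2+o(1))u\log\log u$; your argument yields nothing there either. Tracking the constraints, your method proves the lemma only for roughly $\log y\le(\log x)^{1/2-o(1)}$, not for the full stated range.

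The loss is intrinsic to the majorant you chose: bounding the number of primes $q\le t$ with $q-1$ being $y$-smooth by $\Psi(t,y)$ throws away a factor $\asymp\log t$, and this is fatal near $t=y$, where $\Psi(t,y)\asymp t$ (e.g. $\Psi(y^2,y)\ge(1-\log 2)y^2$) while primes have density $1/\log t$; that is precisely where the factor $\log y$ in your bound for $\log G(s)$ comes from, and with only $T(t)\le\min(\pi(t),\Psi(t,y))$ as input, Rankin's method cannot recover the constant $1$ in $\exp(-(1+o(1))u\log\log u)$ once $\log y\gtrsim u$. Repairing this would require a genuinely nontrivial estimate for primes with $y$-smooth shift (saving the factor $\log t$ over $\Psi(t,y)$), or a differently organized count. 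The paper itself does neither: it quotes the $\varphi$-statement verbatim as Theorem 3.1 of \cite{BFPS}, and gets the $\sigma$-statement by first discarding the $n$ with a divisor $p^a>y$, $a\ge 2$ (Lemma \ref{lm22}, cost $O(xy^{-1/2})$), after which every large prime factor of $n$ is unitary and the cited argument applies with $p+1$ in place of $p-1$. So what you have written is an attempt to reprove that black box from scratch, and in the range where the paper actually invokes the lemma, the attempt does not go through.
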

\begin{proof}
The first result is Theorem 3.1 in \cite{BFPS}.
The second result can be proved similarly, noting that the number of integers $n\leq x$
such that $p^a\mid n$ for some prime $p$ with $\sigma(p^a)>y, a\geq 2$ is
$\ll x/y^{-1/2}$ by Lemma \ref{lm22}.
\end{proof}

\section[Proof of Theorem 1.1]{Proof of Theorem \ref{th11}}

If (\ref{eq12}) holds and $q_i=k_il-1 (i=1, 2)$ are both prime,
then we have $\sigma(m_iq_i)=\sigma(m_i)k_il (i=1, 2)$ must be equal since
$k_1\sigma(m_1)=k_2\sigma(m_2)$.  Moreover, we have
$a_1m_2q_2-a_2m_1q_1=l(a_1m_2k_2-a_2m_1k_1)-(a_1m_2-a_2m_1)=a_1b_2-a_2b_1$
since $a_1m_2k_2=a_2m_1k_1$ and $a_1m_2-a_2m_1=-a_1b_2+a_2b_1$.

The corresponding statement with $\sigma$ replaced by $\varphi$ can be
similarly proved.

Finally, for the last statement, we can easily see that $m_1, m_2$ must have the same prime factors
since $\varphi(m_1)/m_1=\varphi(m_2)/m_2$.
This completes the proof of Theorem \ref{th11}.

\section[Proof of Theorem 1.2]{Proof of Theorem \ref{th12}}

We prove Theorem \ref{th12} for $\sigma$.  The corresponding statement for $\varphi$
can be proved in a similar, but slightly simpler way since we need not
to be careful about square factors.

Let $B(x)$ be the set of integers $n\leq x$ not in the form (\ref{eq122}) given in Theorem \ref{th11}
for which the equation $\sigma(a_1n+b_1)=\sigma(a_2n+b_2)$ hold.
All that we should prove is that $\#B(x)\ll x\exp(-(2^{-1/2}+o(1))(\log x \log\log\log x)^{1/2})$.
Of course, we may assume that $x$ is sufficiently large.

We put $y=\exp (2^{1/2} (\log x \log\log\log x)^{1/2}), z=y^{1/2}, z_1=z/\log x$
and $z_2=z\log x$.  Thus we have $u=(\log x)/(\log y)=2^{-1/2} (\log x)^{1/2} (\log\log\log x)^{-1/2}$.
Theorem \ref{th12} for $\sigma$ can be formulated that $\#B(x)<xz^{-1+o(1)}$.
We note that we can take $x$ to be sufficiently large so that $y$ is also sufficiently large.

Let us consider the following sets of integers:
\begin{equation*}
\begin{split}
B_1(x)=&\{n\mid n\in B(x), a_1n+b_1\in S\textrm{ or } a_2n+b_2\in S\},\\
B_2(x)=&\{n\mid n\in B(x), P(\sigma(a_1n+b_1))\leq y\},\\
B_0(x)=&B(x)\backslash (B_1(x)\cup B_2(x)).
\end{split}
\end{equation*}
We have $\#B_1(x)\ll xy^{-1/2}=x/z$ by Lemma \ref{lm22}.
Moreover, we have $\#B_2(x)\ll \max \{xy^{-1/2}, xz^{-1+o(1)}\}\ll xz^{-1+o(1)}$ by Lemma \ref{lm23}.

Now let $n\in B_0(x)$.  Since $n\not\in B_2(x)$, $\sigma(a_1n+b_1)$ must have
some prime factor $p>y$.  Therefore $a_1n+b_1$ must have some prime power
factor $q^a$ with $\sigma(q^a)>y$.  However, we must then have $a=1$ since
$n\not\in B_1(x)$.  So that $a_1n+b_1$ must have some prime factor of the form
$k_1p-1$, where $k_1\geq 1$ is an integer.  Similarly, $a_2n+b_2$ must have
some prime factor of the form $k_2p-1$, where $k_2\geq 1$ is an integer.
So that we can write
\begin{equation}\label{eq41}
a_in+b_i=m_i(k_ip-1) (i=1, 2),
\end{equation}
where $p$ is a prime greater than $y$ and $m_1, m_2, k_1, k_2$ are positive integers
such that $k_ip-1$ is a prime not dividing $m_i$ for each $i=1, 2$.  Now we have
\begin{equation}\label{eq42}
\sigma(m_1)k_1=\sigma(m_2)k_2
\end{equation}
since
\begin{equation}
\sigma(m_1)k_1p=\sigma(a_1n+b_1)=\sigma(a_2n+b_2)=\sigma(m_2)k_2p.
\end{equation}

Now we divide $B_0(x)$ into two sets
\begin{equation*}
B_3(x)=\{n\mid n\in B_0(x), a_in+b_i=m_i(k_ip-1), m_1m_2\leq x/z\}
\end{equation*}
and
\begin{equation*}
B_4(x)=\{n\mid n\in B_0(x), a_in+b_i=m_i(k_ip-1), m_1m_2>x/z\}.
\end{equation*}

We show that $\#B_3(x)<xz^{-1+o(1)}$.
Multiplying (\ref{eq41}) by $a_{3-i}$ and subtracting one from the other, we have
\begin{equation}\label{eq43}
a_2m_1(k_1p-1)-a_1m_2(k_2p-1)=a_2b_1-a_1b_2.
\end{equation}

Let us denote $c=\gcd (k_1, k_2)$ and $\overline{k_1}=k_1/c, \overline{k_2}=k_2/c$.
By virtue of (\ref{eq42}), $\overline{k_1}, \overline{k_2}$ are uniquely determined
by $m_1, m_2$.

If $a_2m_1k_1\neq a_1m_2k_2$, then $p$ can be expressed by
\begin{equation}
p=\frac{a_2(m_1+b_1)-a_1(m_2+b_2)}{a_2m_1k_1-a_1m_2k_2}.
\end{equation}
Therefore we have
\begin{equation}
pc=\frac{a_2(m_1+b_1)-a_1(m_2+b_2)}{a_2m_1\overline{k_1}-a_1m_2\overline{k_2}}.
\end{equation}
So that $c$ can be uniquely determined by $p, m_1, m_2$.
Since $p$ divides $d=a_2(m_1+b_1)-a_1(m_2+b_2)$, the number of possibilities of $p$
is at most $\omega(d)\ll\log d\ll\log (x/z)\ll\log x$.
Thus the number of possibilities of a pair $(m_1, m_2)$ is $\ll (x/z)\log x=x/z_1$.
Now we obtain $\#B_3(x)\ll x\log x/z_1=xz^{-1+o(1)}$ provided that $a_2m_1k_1\neq a_1m_2k_2$.

If $a_2m_1k_1=a_1m_2k_2$, then (\ref{eq43}) gives $a_2(b_1+m_1)=a_1(b_2+m_2)$.
Hence we have
\begin{equation}
m_1=\frac{k_2(a_1b_2-a_2b_1)}{a_2(k_2-k_1)}, m_2=\frac{k_1(a_1b_2-a_2b_1)}{a_1(k_2-k_1)}.
\end{equation}
Therefore, taking (\ref{eq42}) into account, $k_1, k_2, m_1, m_2, l=p, q_1=k_1p-1, q_2=k_2p-1$ satisfy the condition
in Theorem \ref{th11}.

Next we show that $\#B_4(x)$ is also at most $xz^{-1+o(1)}$.
If $k_1=k_2$, then $k_1p-1$ divides $a_1b_2-a_2b_1$.  Since we have taken $x$ sufficiently large,
we have $p-1>y-1>\abs{a_1b_2-a_2b_1}$ and therefore $a_1b_2-a_2b_1=0$, contrary to the assumption.
Hence we see that $k_1\neq k_2$.  Now, recalling that $q_1=k_1p-1$ and $q_2=k_2p-1$
are both prime, (\ref{eq41}) implies that
$n\equiv\psi(a_1, b_1, a_2, b_2, k_1, k_2)\pmod{(k_1p-1)(k_2p-1)}$,
where $\psi(a_1, b_1, a_2, b_2, k_1, k_2)$ is the unique simultaneous solution of
two congruences $a_1\psi(a_1, b_1, a_2, b_2, k_1, k_2)\equiv -b_1\pmod{k_1p-1}$
and $a_2\psi(a_1, b_1, a_2, b_2, k_1, k_2)\equiv -b_2\pmod{k_2p-1}$.

Since $m_1m_2\geq x/z$, we have $(k_1p-1)(k_2p-1)\leq (a_1x+b_1)(a_2x+b_2)/(x/z)\ll xz$.
Now the number of elements of $B_4(x)$ can be bounded by
\begin{equation}
\sum_{p, q_1, q_2}\left(\frac{x}{q_1q_2}+1\right),
\end{equation}
where $p$ runs over the primes greater than $y-1$ and $q_1, q_2$ run over the primes such that
$q_1\equiv q_2\equiv -1\pmod{p}$ and $q_1q_2\leq (a_1x+b_1)(a_2x+b_2)/(m_1m_2)\ll xz$.

For each prime $p$, the sum can be estimated as
\begin{equation}
\sum_{k_1, k_2}\left(\frac{x}{(k_1p-1)(k_2p-1)}+1\right)\ll x\left(\sum_{k}\frac{1}{kp}\right)^2+\frac{xz\log (xz)}{p^2}
\ll\frac{xz_2}{p^2},
\end{equation}
where $k$ runs all positive integers up to $cx$ for some suitable constant $c$.

Since $p>y-1$, we have
\begin{equation}
\#B_4(x)\ll\sum_{p>y-1}\frac{xz_2}{p^2}\ll\frac{xz_2}{y}\ll xz^{-1+o(1)}.
\end{equation}

Clearly $B(x)=\bigcup_{1\leq i\leq 4}B_i(x)$ and each $\#B_i(x)$ is at most
$xz^{-1+o(1)}$.  Therefore $\#B(x)<xz^{-1+o(1)}$.  This proves Theorem \ref{th12}
for $\sigma$.  As we noted in the beginning of this section, Theorem \ref{th12}
for $\varphi$ can be proved in a similar way.  Now the proof is complete.

\section{Proof of corollaries}

Assume $\varphi(n)=\varphi(n+k)$ with $n$ satisfying the condition of Theorem \ref{th11}
and let $m_1, m_2$ be as appear in Theorem \ref{th11}.
Then $m_1$ and $m_2=m_1+k$ must have the same prime factors.  Thus $k$ must be even.
This gives Corollary \ref{th13}.

We derive Corollary \ref{th14} from Theorem \ref{th12}.  Assume that $\sigma(n)=\sigma(n+1),
n=m_1q_1, n+1=m_2q_2$, $m_1, m_2, k_1, k_2$ satisfy the relations (\ref{eq12})
and $q_i=k_il-1 (i=1, 2)$ are both prime.  We can take $k_1, k_2$ to be relatively prime
by replacing $k_1, k_2, l$ by $k_1/\gcd (k_1, k_2), k_2/\gcd (k_1, k_2), l\gcd (k_1, k_2)$ respectively.
The relations (\ref{eq12}) give
\begin{equation}\label{eq51}
m_1=k_2/(k_2-k_1), m_2=k_1/(k_2-k_1)
\end{equation}
and
\begin{equation}\label{eq52}
k_1\sigma(m_1)=k_2\sigma(m_2).
\end{equation}
We show that $m_1=k_2, m_2=k_2$ and $k_2=k_1+1$.  If $d$ divides $k_2-k_1$,
then $d$ must divide $k_2$ and therefore $d$ must divide $(k_1, k_2)$.  So that
we must have $d=1$.  Therefore $k_2-k_1=1$.  Now (\ref{eq52}) gives
$k_1\sigma(k_1+1)=(k_1+1)\sigma(k_1)$.  Since $k_1, k_1+1$ are clearly relatively
prime, $k_1$ must divide $\sigma(k_1)$ and $k_1+1$ must divide $\sigma(k_1+1)$.
Now we have $\sigma(k_1)=kk_1$ and $\sigma(k_1+1)=k(k_1+1)$ for some integer $k$.
This proves Corollary \ref{th14}.

{}
\vskip 8pt
{\small Center for Japanese language and culture, Osaka University,\\ 562-8558, 8-1-1, Aomatanihigashi, Minoo, Osaka, Japan}\\
{\small e-mail: \protect\normalfont\ttfamily{tyamada1093@gmail.com} URL: \url{http://tyamada1093.web.fc2.com/math/}
\end{document}